\newcommand{\A}{\mathcal{A}}
\newcommand{\cd}[1]{\risS{-9}{#1}{}{24}{15}{15}}
\newcommand{\cdO}{\rb{1pt}{\chd{cd1ch4}}}
\newcommand{\cdTF}{\rb{1pt}{\chd{cd34ch4}}}
\newcommand{\cdTO}{\rb{1pt}{\chd{cd31ch4}}}
\newcommand{\cdTT}{\rb{1pt}{\chd{cd33ch4}}}
\newcommand{\cdTV}{\rb{1pt}{\chd{cd35ch4}}}
\newcommand{\cdTW}{\rb{1pt}{\chd{cd32ch4}}}
\newcommand{\cdWO}{\rb{1pt}{\cd{cd21ch4}}}
\newcommand{\cdWW}{\rb{1pt}{\cd{cd22ch4}}}
\newcommand{\chd}[1]{\risS{-10}{#1}{}{25}{20}{15}}
\newcommand{\cld}[1]{\risS{-9.5}{#1}{}{23}{20}{15}}
\newcommand{\ig}{\includegraphics}
\newcommand{\rb}{\raisebox}
\newcommand\risS[6]{\rb{#1pt}[#5pt][#6pt]{\begin{picture}(#4,15)(0,0)
  \put(0,0){\ig[width=#4pt]{#2.eps}} #3
     \end{picture}}}
\title{Partial-dual genus polynomial as a weight system}
\author{Sergei Chmutov}
\abstract{
    We prove that the partial-dual genus polynomial considered as a function on chord diagrams satisfies the four-term relation. Thus it is a weight system from the theory of Vassiliev knot invariants.
    }
\keywords{Ribbon graphs, partial duality, partial-dual genus polynomial, Gross--Mansour--Tucker conjecture}
\begin{document}

\begin{flushright}
\includegraphics[width=220pt]{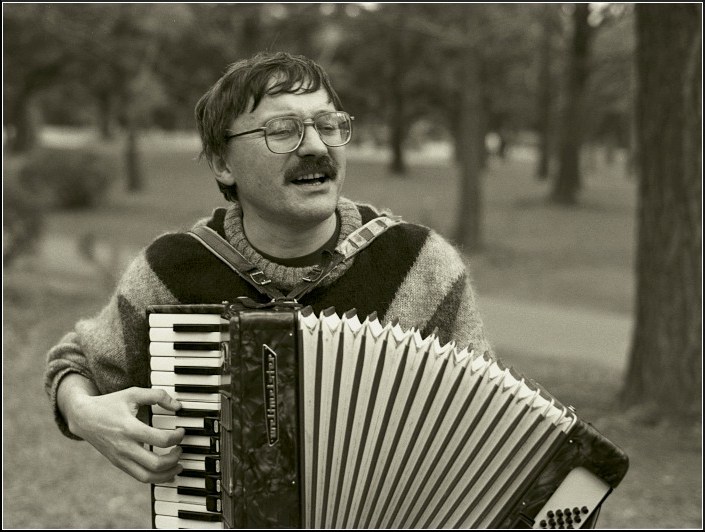}

{\it To the memory of my friend Sergei Duzhin}
\end{flushright}

\section{Introduction} \label{s:intro}

The Partial duality of ribbon graphs  relative to a subset of its edges was introduced in \cite{Ch}. This operation often changes the genus of the ribbon graph. The classical Euler–Poincar\'e duality is the partial duality relative to all the edges. For an excellent exposition see \cite{EMM}. J.~L.~Gross, T.~Mansour, and T.~W.~Tucker \cite{GMT} introduced the partial-dual genus polynomial as a genus generating function for all partial duals of a given ribbon graph. 

Chord diagrams are the main combinatorial object of the theory of Vassiliev knot invariants (see, for example , \cite{CDM}). We can consider a chord diagram as an oriented ribbon graph with a singe vertex given by the circle of the chord diagram and ribbon-edges corresponding to its chords. The four-term relation for functions on chord diagrams is the key relation in the theory of Vassiliev knot invariants. The functions on chord diagram satisfying it are called {\it weight systems}.

Here we prove that the partial-dual genus polynomial is a weight system.

In Sec.\ref{s:rg} we recall the basic notions and properties of ribbon graphs and partial duality. In Sec.\ref{s:cd} we recall the basic facts about chord diagrams.
We formulate and prove our main result in Sec.\ref{s:main}. A few open problems for further research are listed in Sec.\ref{s:op}.

I am grateful to Zhiyun Cheng and Sergei Lando for their  very interesting 
discussions and the anonymous referee's  valuable comments.

\section{Ribbon graphs} \label{s:rg}
\begin{definition}\rm
A {\it\bfseries ribbon graph} is a surface with boundary decomposed into a number of closed topological discs of two types, 
{\it\bfseries vertex-discs} and {\it\bfseries edge-ribbons}, satisfying the conditions: the discs of the same type are pairwise disjoint; the vertex-discs and the edge-ribbons intersect by disjoint line segments, each such line segment lies on the boundary of precisely one vertex and precisely one edge  and every edge contains exactly two such line segments, which are not adjacent.
\end{definition}
Ribbon graphs are considered up to a homeomorphism preserving the decomposition. We think about edge-ribbons as topological rectangles attached to the vertices along two opposite sides.

Here are a few examples.
$$\mbox{(a)}
\risS{-35}{rg-ex1}{}{45}{15}{40}\quad
  \mbox{(b)}
\risS{-30}{rg-ex2}{}{45}{0}{0}\quad
  \mbox{(c)}
\risS{-40}{basket}{}{50}{0}{0}\quad
  \mbox{(d)}
\risS{-30}{rg-ex3}{}{180}{0}{0}
$$
A ribbon graph can be regarded as a regular neighborhood of a graph cellularly embedded into a closed surface. Then the neighborhoods of vertices and edges represent vertex-discs and edge-ribbons respectively.
Obtaining a closed surface into which the core graph of a ribbon graph is embedded can be done by gluing a disc, called a {\it\bfseries face-disc}, to each boundary component.
Here is how our examples (b) and (c) look as graphs on surfaces.
$$\rb{20pt}{(b)}
\risS{-15}{rg-ex2}{}{45}{0}{0}\ \risS{-1}{totorl}{}{20}{0}{0}\ 
\risS{-20}{rg-ex2-torus}{}{85}{0}{0}\quad
\rb{20pt}{(c)}\risS{-25}{basket}{}{50}{35}{25}=
\risS{-20}{gr34}{}{70}{0}{0}\ \risS{-1}{totorl}{}{20}{0}{0}\ 
\risS{-20}{basket-torus}{}{85}{0}{0}
$$
In this paper we will deal with oriented ribbon graphs. Example (d) above is a non-orientable ribbon graph. 

The classical classification theorem says that the surface of an 
(orientable) ribbon graph is homeomorphic to a surface of genus $g$ with a number of holes corresponding to the boundary components of the ribbon graph. This is the number of face-discs we need to glue in to get a closed surface. Such a homeomorphism forgets the decomposition of the ribbon graph into vertex-discs and edge-ribbons. The genus $g$ can be determined using the Euler formula $2-2g=v-e+f$, where $v,e,f$ are the numbers of 
vertex-discs, edge-ribbons, and face-discs respectively. The ribbon graphs in examples (a), (b), and (c) are all of genus 1.

\begin{lemma}[Sliding Lemma]
Sliding one edge-ribbon of a ribbon graph along another one results in a homeomorphic surface. Thus they have equal genera.
\end{lemma}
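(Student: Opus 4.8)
The statement is that sliding one edge-ribbon of a ribbon graph along another edge-ribbon (that is, moving one of the attaching arcs of edge $a$ along the boundary of the vertex-disc, past the attaching arc of edge $b$, so that it travels through the ribbon $b$ and emerges on its far side) produces a homeomorphic surface. Let me sketch how I would argue this.

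First I would set up the operation precisely. A "slide of $a$ over $b$" is a move that is local to a neighborhood of one end of $a$ together with the whole of $b$ and the portion of the vertex boundary between them; away from this neighborhood the ribbon graph is unchanged. So it suffices to treat the model situation: a single vertex-disc with two edge-ribbons $a$ and $b$ attached, with one foot of $a$ adjacent (along the vertex boundary) to one foot of $b$, and the slide pushes that foot of $a$ across the foot of $b$ and along the ribbon $b$ to come out near the other foot of $b$. Because the rest of the surface is glued on along arcs that are disjoint from the region being modified, a homeomorphism of the model extends by the identity to a homeomorphism of the whole surface. This reduces the lemma to a finite, explicitly drawable local model.

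The core of the argument is then to exhibit the homeomorphism on the model. The cleanest way is to observe that the slide does not change the underlying cell structure "up to isotopy of the gluing": the surface obtained after the slide is built from the same vertex-disc and the same two rectangles $a$, $b$, glued along arcs of the same boundary circle, and the new attaching arc of $a$ is obtained from the old one by an isotopy of the boundary circle of the vertex-disc — an isotopy that is allowed to drag the arc across the footprint of $b$ and along $b$'s ribbon, since $b$'s ribbon is itself a disc and the arc can be pushed along its surface. Concretely: thicken the vertex-disc by absorbing the ribbon $b$ into it (the union of the vertex-disc with the ribbon $b$ is again a disc $D'$, because $b$ is attached along two disjoint arcs of the boundary — this is the standard fact that attaching a 1-handle along two arcs of a disc's boundary within an orientable surface gives a disc, provided the two arcs are attached without a twist, which is exactly the orientability hypothesis). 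Inside $D'$, the two feet of $a$ before and after the slide are connected by an ambient isotopy of $\partial D'$, hence the two attachments of $a$ to $D'$ are ambiently isotopic, hence the resulting surfaces are homeomorphic; finally forget the absorption and recover $b$ as a separate ribbon, which does not affect the homeomorphism type.

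The main obstacle — and the place where orientability is essential — is the claim that $D' = (\text{vertex-disc}) \cup b$ is a disc and that the slide corresponds to an honest isotopy rather than an isotopy composed with a flip. On a non-orientable ribbon graph the foot of $a$ could return from its trip along $b$ reflected, turning a ribbon attachment into a differently-twisted one and changing the homeomorphism type; the orientability assumption (which we have imposed throughout, example (d) notwithstanding) rules this out, so the isotopy is genuinely an isotopy and the surfaces are homeomorphic. Once the homeomorphism is in hand, the equality of genera is immediate from the classification of surfaces, since homeomorphic orientable surfaces with boundary have the same genus. I would present the model picture, note the absorption of $b$, and cite orientability for the no-flip claim; the remaining verifications are routine.
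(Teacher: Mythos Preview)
Your overall strategy---localize, absorb $b$ into the vertex, and realize the slide as an ambient isotopy of the attaching arc of $a$---is sound, and it is considerably more detailed than what the paper actually offers: the paper's entire proof is the sentence ``The proof is obvious from the picture'' together with a three-frame cartoon of the slide. So in that sense you have done more, not less.

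There is, however, a genuine slip. You claim that $D' = (\text{vertex-disc}) \cup b$ is again a disc, citing ``the standard fact that attaching a 1-handle along two arcs of a disc's boundary within an orientable surface gives a disc.'' That fact is false: attaching an untwisted 1-handle to a disc along two disjoint boundary arcs gives an \emph{annulus}, not a disc. (Attaching with a twist gives a M\"obius band; neither case returns a disc.) In the chord-diagram setting of this paper every edge is a loop on the single vertex, so this is exactly the situation you are in.

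Fortunately the error is local and your argument survives the correction. With $D'$ an annulus, $\partial D'$ has two circles; the initial position $\alpha_0$ of the sliding foot of $a$ and its final position $\alpha_1$ lie on the \emph{same} boundary circle of $D'$, because the slide travels along one free side of the ribbon $b$, and that side is part of a single component of $\partial D'$. The arc of $\partial D'$ from $\alpha_0$ to $\alpha_1$ consists only of a short piece of the old vertex boundary, one long side of $b$, and another short piece of vertex boundary, with no other attaching arcs on it (adjacency of the foot of $a$ to the foot of $b$ guarantees this). Hence there is an ambient isotopy of $D'$ carrying $\alpha_0$ to $\alpha_1$ and fixing all other attaching data, and your conclusion follows exactly as you wrote it. The orientability remark is still the right thing to say to rule out the foot of $a$ returning reflected. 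So: keep the architecture, replace ``disc'' by ``annulus'' in the loop case (and note that when $b$ is not a loop your disc claim is correct, since disc $\cup$ ribbon $\cup$ disc is a disc), and the proof is complete.
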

The proof is obvious from the picture\qquad\quad
$\risS{-40}{slide-1}{}{35}{10}{0}\quad 
   \risS{-18}{totor}{}{30}{0}{0}\quad 
\risS{-40}{slide-2}{}{35}{0}{0}\quad 
   \risS{-18}{totor}{}{30}{0}{0}\quad 
\risS{-40}{slide-3}{}{35}{0}{0}$

Sliding the left loop in example (c)\\ along the middle loop gives a different\\
graph on a torus:

$\qquad
\risS{-20}{gr34-sl-1}{}{70}{35}{30}\ \risS{-1}{totor}{}{30}{0}{0}\ 
\risS{-20}{gr34-sl-2}{}{60}{0}{0}\ \risS{-3}{totorl}{}{30}{0}{0}\ 
\risS{-20}{ba-sl-torus}{}{80}{0}{0}$

\subsection{Partial duality.} \label{ss:pd}\rm
For a ribbon graph $G$ and a subset of the edge-ribbons $A\subseteq E(G)$, the {\it\bfseries partial dual}, $G^A$ of $G$ relative to $A$ is a ribbon graph constructed as follows. The vertex-discs of $G^A$ are bounded by the connected components of the boundary of the spanning subgraph of $G$ containing all the vertices of $G$ and only the edges from $A$. The edge-ribbons of $G^A$ are in
one-to-one correspondence with those of $G$. 
The edge-ribbons of
$E(G)\setminus A$ are attached to these new vertices exactly at the same places as in $G$. The edge-ribbons from $A$ become parts of the new vertex-discs now. 
For $e\in A$ we take a copy of $e$, $e'$, and attach it to the new vertex-discs in the following way. The rectangle representing $e$ intersects 
with the vertex-discs of $G$  at a pair of opposite sides.
The same rectangle intersects the boundary of the spanning subgraph,
that is, of
the new vertex-discs, along the arcs of the other pair of its opposite sides. We attach $e'$ to these arcs by
this second pair. The copies of the first pair of sides in $e'$ become the arcs of the boundary of $G^A$.

Partial duality relative to a set of edges can be done step by step one edge at a time. The partial duality relative to one edge can be illustrated as follows.
$$G =\ 
  \risS{-8}{dus1}{\put(34,-2){\mbox{$e$}}}{70}{0}{0}\quad 
   \risS{-4}{totor}{}{30}{0}{0}\quad 
  \risS{-35}{dus3}{\put(-4,13){\mbox{$e'$}}}{110}{45}{40}\ =\ 
  \risS{-35}{dus4}{\put(47,65){\mbox{$e'$}}}{70}{10}{0}\ =\ G^{\{e\}}
$$
Here the boxes with dashed arcs mean that there might be other edges attached to these vertices. Partial duality relative to a loop acts as on this figure backwards, from right to left.

Here is the partial dual of example (c) relative to the middle loop.
$$G\ = \  \risS{-20}{gr34}{\put(50,49){\mbox{$e$}}}{70}{30}{25}\qquad 
\risS{-1}{totor}{}{30}{0}{0}\qquad 
\risS{-18}{gr34-pd}{\put(40,17){\mbox{$e'$}}}{70}{0}{0}\ =\ G^{\{e\}}\ ,
$$
It is a graph cellularly embedded into a sphere.

The concept of partial duality can be generalized to hypermaps \cite{CVT22}
where the edges are represented not just by topological rectangles but rather by arbitrary polygons with an even number of sides, and every other side attached to a vertex.

\subsection{Partial-dual genus polynomial.} \label{ss:pdp}\rm
In \cite{GMT} J.~L.~Gross, T.~Mansour, and T.~W.~Tucker introduced the 
{\it\bfseries partial-dual genus polynomial} ${\ }^\partial\Gamma_G(z)$ 
as the generating function for the number of all partial duals of $G$ of the given genus. 
$${\ }^\partial\Gamma_G(z):=\sum_{A\subseteq E(G)} z^{g(G^A)}$$
For example, the partial-dual genus polynomial of the ribbon graph 
$G_{\mbox{\scriptsize (b)}}$ in example (b) is
${\ }^\partial\Gamma_{G_{\mbox{\scriptsize (b)}}}(z) = 2+2z$.
The sum of the coefficients of the polynomial is always $2^e$ because that is the total number of subsets of edges of $G$ and therefore the total number of its partial duals.

The partial-dual genus polynomial was studied in numerous papers 
\cite{GMT21,GMT21e,CVT21,YaJi21,YaJi22,YaJi22b,YaJi22s} and its generalization to delta-matroids in \cite{YaJi22dm,Yus22}.

\section{Chord diagrams} \label{s:cd}
The partial dual of a connected ribbon graph relative to a spanning tree is a one-vertex ribbon graph. In \cite{GMT} such graphs are called bouquets. We will encode them by chord diagrams which are the main combinatorial object of the theory of Vassiliev knot invariants \cite{CDM}.
\begin{definition}\rm \label{d:cd}
A {\it\bfseries chord diagram} of order $n$ (or degree $n$) is an oriented
circle with a distinguished set of $n$ disjoint pairs of distinct
points, considered up to orientation preserving homeomorphisms of
the circle. The set of all chord diagrams of order $n$ will be
denoted by $\mathbf{A}_n$.
\end{definition}
We shall usually omit the orientation of the circle in pictures of
chord diagrams, assuming that it is oriented counterclockwise. Also,  we indicate the points in a pair connecting them by a {\it\bfseries chord}.

\medskip
\noindent
{\bf Examples.} Here are the chord diagrams with  at most 3 chords.
$$\mathbf{A}_1=\Bigl\{\cdO\Bigr\},\quad
\mathbf{A}_2=\Bigl\{\cdWO,\cdWW\Bigr\},\quad
\mathbf{A}_3=\Bigl\{\cdTO,\cdTW,\cdTT,\cdTF,\cdTV\Bigr\}.
$$
There are 18 chord diagrams with 4 chords:
$$\begin{array}{ccccccccc}
\cd{cd4-01}\ , & \cd{cd4-02}\ , & \cd{cd4-03}\ , & \cd{cd4-04}\ , & 
\cd{cd4-05}\ , & \cd{cd4-06}\ , &
\cd{cd4-07}\ , & \cd{cd4-08}\ , & \cd{cd4-09}\ , \\
\cd{cd4-10}\ , &  \cd{cd4-11}\ , & \cd{cd4-12}\ , &
\cd{cd4-13}\ , & \cd{cd4-14}\ , & \cd{cd4-15}\ , & \cd{cd4-16}\ , & 
\cd{cd4-17}\ , & \cd{cd4-18}\ .
\end{array}
$$

\subsection{Four-term relation for chord diagrams.} \label{ss:4T}\rm
The functions on chord diagrams satisfying the four-term relation play a key role in the theory of Vassiliev knot invariants. We can take a dual point of view considering the space spanned by all chord diagrams and impose the four-term relation on chord diagrams \cite[Sec.4.11]{CDM}:\vspace{-7pt}
\begin{equation}\tag{4T}\label{eq:4T}
\chd{4T1}\ -\ \chd{4T2}\ +\ \chd{4T4}\ -\ \chd{4T3}\ =0. 
\end{equation}
Here we assume that the diagrams in the pictures may have other
chords with endpoints on the dotted arcs, while all the endpoints of
the chords on the solid portions of the circle are explicitly shown.
For example, this means that in the first and second diagrams the
two bottom points are adjacent. The chords omitted from the pictures
should be the same in all  four diagrams. We say that a function on chord diagram satisfies the four-term relation if the alternating sum of its values on the four diagrams of \eqref{eq:4T} equals zero.

The theory of Vassiliev knot invariants deals with the vector space $\A_n$ which is the quotient of the space spanned by all chord diagrams $\mathbf{A}_n$
modulo the subspace spanned by all 4-term linear combinations.

Here are the dimensions and some bases of the spaces $\A_n$ for $n=1$, 2 and 3:

$\A_1=\bigl<\chd{cd1ch4}\bigr>$, $\dim\A_1=1$.\qquad
$\A_2=\bigl<\cld{cd21ch4},\cld{cd22ch4}\bigr>$,
$\dim\A_2=2$, since the only 4-term relation involving chord
diagrams of order 2 is trivial.\qquad
$\A_3=\bigl<\chd{cd32ch4},\chd{cd33ch4},\chd{cd34ch4}\bigr>$,
$\dim\A_3=3$, since $\mathbf{A}_3$ consists of 5 elements, and there
are 2 independent 4-term relations:\vspace{-10pt}
$$\chd{cd31ch4}\ =\ \chd{cd32ch4}\qquad\mbox{and}\qquad
\chd{cd35ch4}\ -\ 2\,\chd{cd34ch4}\ +\ \chd{cd33ch4}\ =\ 0.
$$

The {\it\bfseries multiplication} of two chord diagrams $D_1$ and $D_2$ is defined by a connected sum, that is by cutting
and gluing the two circles as shown:\vspace{-10pt}
$$\rb{-12pt}{\rotatebox{90}{\chd{cd34ch4}}}\!\!\cdot\ \chd{cd34ch4}\ =\
\rb{-3.9mm}{\ig[width=22mm]{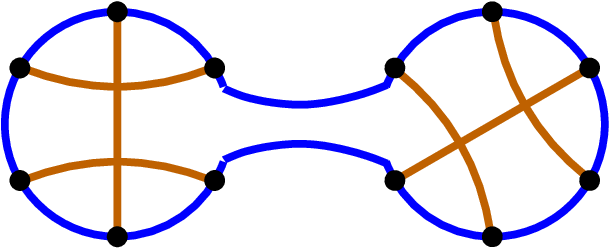}}\ =\ \chd{cd_prod3}
$$
Of course, the product of diagrams depends on the choice of the
points where the diagrams are cut, but remarkably modulo \eqref{eq:4T}, it does not. There is also a comultiplication of chord diagrams which turns the space $\sum\limits_{n=0}^\infty \A_n$ into a graded bialgebra. As such, it is isomorphic to the polynomial algebra over the primitive space. See
\cite{CDM} for details.

\subsection{Ribbon graphs from chord diagrams.} \label{ss:rg-cd}\rm
We can construct a ribbon graph from a chord diagram. Namely , we attach a vertex-disc to the circle of a chord diagram thinking about is as a hemisphere under the plane of the chord diagram. Then we 
thicken the chords to narrow edge-ribbons.
Here is an example.
$$D\ =\ \risS{-9}{cd34ch4}{}{24}{0}{0}\qquad 
\risS{-1}{totorl}{}{30}{0}{0}\qquad
\risS{-25}{basket}{}{50}{30}{25}\ =\ 
\risS{-20}{gr34}{}{70}{0}{0}\ =\ G_D
$$
Thus the ribbon graph of example (c) is associated with the fourth chord diagram in $\mathbf{A}_3$. 

The {\it\bfseries ribbon-join operation} $G_1\vee G_2$ of one-vertex  ribbon graphs $G_1$ and $G_2$ from \cite{Mo3} corresponds to the multiplication of chord diagrams.

\bigskip
It is easy to interpret sliding operations on chord diagram. Here is how it looks for example (c)
$$\risS{-9}{cd34-sl1}{}{24}{0}{20}\ \risS{-1}{totor}{}{20}{0}{0}\
\risS{-9}{cd34-sl2}{}{24}{0}{0}\ \risS{-1}{totor}{}{20}{0}{0}\ 
\risS{-9}{cd34-sl3}{}{24}{0}{0}\ =\
\risS{-9}{cd34-sl4}{}{24}{0}{0}\ \risS{-1}{totor}{}{20}{0}{0}\
\risS{-9}{cd34-sl5}{}{24}{0}{0}\ =\ 
\risS{-9}{cd35ch4}{}{24}{0}{0}\ .
$$
Using sliding operation on chord diagrams it is easy to reformulate and prove the classical classification theorem for surfaces \cite{Ch20}.
\begin{theorem}
Any (orientable) surface of genus $g$ with $k+1$ boundary components is homeomorphic to the surface associated with a caravan of $k$ ``one-humped camels" and $g$ ``two-humped camels".
$$\risS{-12}{caravan}{\put(18,-8){$k$}\put(77,-8){$g$}}{114}{8}{25}\ ,
$$
which is a product of $k$ chord diagrams with a single chord and $g$ copies of the second chord diagram from $\mathbf{A}_2$ in Sec.\ref{d:cd}.
\end{theorem}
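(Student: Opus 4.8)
The plan is to recast the statement as a combinatorial fact about chord diagrams and then normalise an arbitrary chord diagram to a caravan using only slides. First I would realise the given compact connected orientable surface $S$ of genus $g$ with $k+1$ boundary components as the surface $G_D$ of a chord diagram $D$: since $S$ has nonempty boundary it deformation retracts onto an embedded graph, and after collapsing a spanning tree of that graph the spine becomes a wedge of $n=2g+k$ circles (the first Betti number of $S$); a regular neighbourhood of this spine fills $S$, and such a neighbourhood is precisely a one-vertex oriented ribbon graph, that is, the surface of a chord diagram with $n$ chords. By the Sliding Lemma, passing from $D$ to a slide-equivalent diagram does not change $G_D$ up to homeomorphism, so it suffices to prove that every chord diagram is slide-equivalent to a caravan.

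For the reduction I would induct on the number of chords $n$, with $n=0$ (the disc, the empty caravan) as the base. For $n\ge 1$, among all chords $c$ of $D$ and all choices of one of the two arcs $\alpha$ into which the endpoints of $c$ cut the circle, pick $(c,\alpha)$ minimising the number of chord-endpoints interior to $\alpha$. If this minimum is $0$ the endpoints of $c$ are adjacent, so $D$ is the product $D'\cdot\cdO$ of the diagram $D'$ obtained by deleting $c$ with the one-chord diagram, and the induction hypothesis applies to $D'$. If the minimum is positive, choose a chord $d$ with one endpoint inside $\alpha$; by minimality its other endpoint lies outside $\alpha$ (otherwise the sub-arc of $\alpha$ bounded by $d$ would contain strictly fewer endpoints), so $c$ and $d$ cross and their four endpoints split the circle into four arcs. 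Sliding any chord along $c$ moves one of its feet between the two arcs adjacent to the feet of $c$, and similarly for $d$; combining such moves one can push every other endpoint, one at a time, into a single fixed one of the four arcs, each move strictly decreasing the number of endpoints lying in the other three. When those three arcs are empty the four endpoints of $c$ and $d$ are consecutive, so $D$ has become the product $D''\cdot\cdWW$ with $D''$ a diagram on $n-2$ chords and $\cdWW$ the crossing pair; apply the induction hypothesis to $D''$.

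It remains to check that the caravan one lands on is the expected one. Under the connected-sum multiplication of chord diagrams the number of edges and the genus are additive (the latter because the operation is a boundary-connected sum of surfaces), hence by the Euler formula $2-2g=v-e+f=1-e+f$ of Section~\ref{s:rg} the number $f$ of boundary components satisfies that $f-1$ is additive as well; the one-chord diagram contributes $(e,g,f)=(1,0,2)$ and the crossing pair contributes $(2,1,1)$. So the caravan of $k'$ one-humped and $g'$ two-humped camels has $k'+2g'$ chords, genus $g'$, and $k'+1$ boundary components. Since sliding preserves the whole surface, the caravan produced above has the same genus and the same number of boundary components as $S$, forcing $g'=g$ and $k'=k$; that is the asserted homeomorphism.

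The one place that genuinely needs an argument is the slide reduction in the crossing case — verifying that the intervening endpoints can always be swept into a single arc by admissible slides and that the process terminates. This is the chord-diagram shadow of the classical reduction of the boundary edge-word of a polygon in the proof of the surface classification theorem, with edge slides playing the role of the cut-and-paste moves; the counter ``number of endpoints outside the chosen arc'', which drops by one at every step, makes termination explicit, so the obstacle is really bookkeeping rather than anything deep.
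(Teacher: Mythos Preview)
The paper does not actually give a proof of this theorem; it only remarks that ``using sliding operation on chord diagrams it is easy to reformulate and prove the classical classification theorem for surfaces'' and cites the author's lecture notes [Ch20]. Your proposal is a correct fleshing-out of precisely that hint: realise the surface as the thickening of a one-vertex ribbon graph, and then normalise the chord diagram to a caravan using only slides, with the Sliding Lemma guaranteeing the homeomorphism type is preserved. So your route is the one the paper intends, just carried out in full.

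One small point worth tightening in the crossing case: when you say ``sliding any chord along $c$ moves one of its feet between the two arcs adjacent to the feet of $c$'', make explicit that the four endpoints of $c$ and $d$ alternate on the circle, so each of the four arcs is bounded by one $c$-endpoint and one $d$-endpoint; hence from any arc a slide along $c$ reaches one neighbouring arc and a slide along $d$ reaches the other. With that said, the sweep into a single arc is transparent, and your termination counter (total number of foreign endpoints outside the target arc) does the rest.
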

The non-orientable surfaces can be treated in a similar way, see \cite{Ch20}.

\subsection{Partial duality in terms of chord diagrams.} \label{ss:pd-cd}\rm
In order to describe partial duality in the language of chord diagrams we need to encode multi-vertex ribbon graphs by chord diagrams as well. This can be done using chord diagrams on several circles 
(see \cite[Sec.5.10.1]{CDM} for an even more general concept of Jacobi diagrams on any tangle). We will follow the convention that all circles of such a diagram are depicted outside of each other and that both ends of a chord are attached either from the inside of the corresponding circles or from the outside.
Here is the chord diagram on two circles corresponding to the two-vertex ribbon graph of example (a).
$$\risS{-25}{cd-rg-ex1}{}{30}{0}{25}$$

To perform the partial duality relative to a single edge-ribbon
(that is a chord), we need to break the corresponding circles at the end of the chord and reconnect them by two arcs parallel to the chord, and then place a new chord connecting these two arcs.
In this process, we have to respect the orientations of the circles of the chord diagram so that the new created circles will be still oriented counterclockwise. We illustrate this process by this picture:
$$\risS{-15}{pd-cd}{}{30}{20}{25}\quad\risS{-1}{totor}{}{20}{0}{0}\quad
\risS{-16}{pd-cd1}{}{32}{0}{0}
$$
Here is an example corresponding to the one of Sec.\ref{ss:pd}, expressed in terms of chord diagrams where we perform the partial duality relative to a horizontal chord.
$$
\risS{-11}{cd34ch4}{}{32}{30}{20}\quad\risS{-1}{totor}{}{20}{0}{0}\quad
\risS{-11}{cd34-pd}{}{30}{0}{0}\ =\ \risS{-25}{cd34-pd1}{}{35}{0}{0}
$$

\medskip
\noindent Naturally the partial-dual genus polynomial is multiplicative relative\\ to the product of chord diagrams.

Now we can easily calculate the partial-dual genus polynomial for 
chord diagrams with a small number of chords:
$${\ }^\partial\Gamma_{\!\!\risS{-15}{cd1ch4}{}{19}{0}{19}\!\!}(z) = 2; \qquad 
{\ }^\partial\Gamma_{\!\!\risS{-15}{cd21ch4}{}{19}{0}{0}\!\!}(z) = 4, \qquad
{\ }^\partial\Gamma_{\!\!\risS{-15}{cd22ch4}{}{19}{0}{0}\!\!}(z) = 2+2z;
$$
$${\ }^\partial\Gamma_{\!\!\risS{-15}{cd31ch4}{}{19}{0}{19}\!\!}(z) = 8, \quad 
{\ }^\partial\Gamma_{\!\!\risS{-15}{cd32ch4}{}{19}{0}{0}\!\!}(z) = 8, \quad
{\ }^\partial\Gamma_{\!\!\risS{-15}{cd33ch4}{}{19}{0}{0}\!\!}(z) = 4+4z, \quad
{\ }^\partial\Gamma_{\!\!\risS{-15}{cd34ch4}{}{19}{0}{0}\!\!}(z) = 2+6z, \quad 
{\ }^\partial\Gamma_{\!\!\risS{-15}{cd35ch4}{}{19}{0}{0}\!\!}(z) = 8z.
$$

\bigskip
For the $18$ chord diagrams with $4$ chords we list the  partial-dual genus polynomial in the following table. We name these diagrams 
$d^4_1,\dots,d^4_{18}$ following \cite[Sec.4.4.1]{CDM}. We also provide the 
{\it interlace sequence} from \cite{YaJi21} used for enumeration of bouquet ribbon graphs. Finally, modulo the four-term relation in $\A_4$ there are only $6$ independent chord diagrams. We can choose the set
$\{d^4_3,d^4_6,d^4_7,d^4_{15},d^4_{17},d^4_{18}\}$ as a basis in this vector space. We highlight these elements and give the expression of all other chord diagrams in terms of this basis.

\begin{table}
    \centering
    \resizebox{\textwidth}{!}{
        \begin{tabular}{|c||l||c||l|}
        \hline
        CD&\parbox{2.4in}{name = interlace sequence;\\the partial-dual genus polynomial;\\ relation modulo  \eqref{eq:4T}.}&
        CD&\parbox{2.4in}{name = interlace sequence;\\the partial-dual genus polynomial;\\ relation modulo  \eqref{eq:4T}.}\\ \hline\hline
        \risS{-10}{cd4-01}{}{25}{20}{20}&
        \parbox{2.2in}{\medskip $d^4_1=(3,3,3,3)$; \\[5pt]
        ${\ }^\partial\Gamma_{d^4_1}(z) = 8z+8z^2$;\\[5pt] 
        $d^4_1=d^4_3+2d^4_6-d^4_7-2d^4_{15}+d^4_{17}$.}
             &%
        \chd{cd4-02} &
        \parbox{2.4in}{\medskip $d^4_2=(2,2,2,2)$; \\[5pt]
        ${\ }^\partial\Gamma_{d^4_2}(z) = 2+10z+4z^2$;\\[5pt] 
        $d^4_2=d^4_3-d^4_6+d^4_7$.}
             \\[25pt] \hline
        
        \colorbox{yellow}{\risS{-10}{cd4-03}{}{25}{20}{20}}&
        \parbox{2.2in}{\medskip \colorbox{yellow}{$d^4_3=(2,2,3,3)$};\\[5pt] 
        ${\ }^\partial\Gamma_{d^4_3}(z) = 12z+4z^2$.}
             &%
        \chd{cd4-04} &
        \parbox{2.4in}{\medskip $d^4_4=(1,1,1,3)$;\\[5pt] 
        ${\ }^\partial\Gamma_{d^4_4}(z) = 2+14z$;\\[5pt] 
        $d^4_4=d^4_6-d^4_7+d^4_{15}$.}
             \\[25pt] \hline
        
        \risS{-10}{cd4-05}{}{25}{20}{20}&
        \parbox{2.2in}{\medskip $d^4_5=(1,2,2,3)$;\\[5pt]
        ${\ }^\partial\Gamma_{d^4_5}(z) = 12z+4z^2$;\\[5pt] 
        $d^4_5=2d^4_6-d^4_7$.
        }
             &%
        \colorbox{yellow}{\risS{-10}{cd4-06}{}{25}{20}{20}} &
        \parbox{2.4in}{\medskip \colorbox{yellow}{$d^4_6=(1,1,2,2)$};\\[5pt] 
        ${\ }^\partial\Gamma_{d^4_6}(z) = 2+10z+4z^2$.}
             \\[25pt] \hline
        
        \colorbox{yellow}{\risS{-10}{cd4-07}{}{25}{20}{20}}&
        \parbox{2.2in}{\medskip \colorbox{yellow}{$d^4_7=(1,1)\vee(1,1)$};\\[5pt] 
        ${\ }^\partial\Gamma_{d^4_7}(z) = (2+2z)^2$.}
             &%
        \chd{cd4-08} &
        \parbox{2.4in}{\medskip $d^4_8=(0)\vee(0)\vee(0)\vee(0)$;\\[5pt] 
        ${\ }^\partial\Gamma_{d^4_2}(z) = 16$;\quad $d^4_8=d^4_{18}$.
        }
             \\ \hline
        
        \risS{-10}{cd4-09}{}{25}{20}{20} &
        \parbox{2.2in}{\medskip $d^4_9=(0)\vee(0)\vee(1,1)$;\\[5pt]
        ${\ }^\partial\Gamma_{d^4_9}(z) = 4(2+z)$;\quad $d^4_9=d^4_{17}$.}
             &%
        \chd{cd4-10}  &
        \parbox{2.4in}{\medskip $d^4_{10}=(0)\vee(0)\vee(1,1)$;\\[5pt]
        ${\ }^\partial\Gamma_{d^4_{10}}(z) = 4(2+z)$;\quad $d^4_{10}=d^4_{17}$.}
             \\ \hline
        		
        \risS{-10}{cd4-11}{}{25}{20}{20} &
        \parbox{2.2in}{\medskip $d^4_{11}=(0)\vee(1,1,2)$;\\[5pt]
        ${\ }^\partial\Gamma_{d^4_{11}}(z) = 2(2+6z)$;\\[5pt] $d^4_{11}=d^4_{15}$.}
             &%
        \chd{cd4-12} &
        \parbox{2.4in}{\medskip $d^4_{12}=(0)\vee(1,1,2)$;\\[5pt]
        ${\ }^\partial\Gamma_{d^4_{12}}(z) = 2(2+6z)$;\quad $d^4_{12}=d^4_{15}$.}
             \\[25pt] \hline
        		
        \risS{-10}{cd4-13}{}{25}{20}{20} &
        \parbox{2.2in}{\medskip $d^4_{13}=(0)\vee(2,2,2)$;\\[5pt]
        ${\ }^\partial\Gamma_{d^4_{13}}(z) = 16z$;\\[5pt]
        $d^4_{13}=2d^4_{15}-d^4_{17}$.}
             &%
        \chd{cd4-14}  &
        \parbox{2.4in}{\medskip $d^4_{14}=(0)\vee(0)\vee(1,1)$;\\[5pt]
        ${\ }^\partial\Gamma_{d^4_{14}}(z) = 4(2+z)$;\quad $d^4_{14}=d^4_{17}$.}
             \\[25pt] \hline
        		
        \colorbox{yellow}{\risS{-10}{cd4-15}{}{25}{20}{20}} &
        \parbox{2in}{\medskip \colorbox{yellow}{$d^4_{15}=(0)\vee(1,1,2)$};\\[5pt] 
        ${\ }^\partial\Gamma_{d^4_{11}}(z) = 2(2+6z).$}
             &%
        \chd{cd4-16}  &
        \parbox{2.4in}{\medskip $d^4_{16}=(0)\vee(0)\vee(0)\vee(0)$;\\[5pt]
        ${\ }^\partial\Gamma_{d^4_{16}}(z) = 16$;\quad $d^4_{16}=d^4_{18}$.}
             \\ \hline
        		
        \colorbox{yellow}{\risS{-10}{cd4-17}{}{25}{20}{20}} &
        \parbox{2in}{\medskip 
        \colorbox{yellow}{$d^4_{17}=(0)\vee(0)\vee(1,1)$};\\[5pt] 
        ${\ }^\partial\Gamma_{d^4_{17}}(z) = 4(2+z).$}
             &%
        \colorbox{yellow}{\risS{-10}{cd4-18}{}{25}{20}{20}} &
        \parbox{2in}{\medskip 
        \colorbox{yellow}{$d^4_{18}=(0)\vee(0)\vee(0)\vee(0)$};\\[5pt] 
        ${\ }^\partial\Gamma_{d^4_{18}}(z) = 16.$}
        \\
        \hline
        \end{tabular}
    }
\end{table}

\section{Main result} \label{s:main}
\begin{theorem}The partial-dual genus polynomial as a function on chord diagrams satisfies the 4-term relation.
\end{theorem}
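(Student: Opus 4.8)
The plan is to express the genus of every partial dual through ranks of principal submatrices of the interlacement matrix over $\mathbb{F}_2$, and then to reduce the four-term relation to a short rank computation. \emph{Step 1: a rank formula for the genus.} Let $D$ be a chord diagram with chord set $E$ and let $M=M(D)$ be its interlacement matrix, the symmetric $|E|\times|E|$ matrix over $\mathbb{F}_2$ with zero diagonal whose $(c,c')$-entry is $1$ exactly when the chords $c$ and $c'$ are linked; for $S\subseteq E$ write $M[S]$ for the principal submatrix on $S$. I would prove
$$ g(D^A)=\tfrac12\bigl(\operatorname{rk}_{\mathbb{F}_2}M[A]+\operatorname{rk}_{\mathbb{F}_2}M[E\setminus A]\bigr),\qquad A\subseteq E. $$
The ribbon graph $D^A$ is connected and orientable, so $2-2g(D^A)=v-e+f$; it has $e=|E|$ edges, its $v$ vertices are the boundary components of the bouquet spanned by the chords of $A$, and — since $(D^A)^E=D^{E\setminus A}$ and Euler--Poincar\'e duality exchanges vertices and faces without changing the genus — its $f$ faces are the vertices of $D^{E\setminus A}$, i.e. the boundary components of the bouquet spanned by the chords of $E\setminus A$. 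Using the classical count that the bouquet spanned by a set $S$ of chords has $|S|+1-\operatorname{rk}_{\mathbb{F}_2}M[S]$ boundary components and substituting into the Euler formula gives the displayed identity; both ranks are even (an alternating $\mathbb{F}_2$-form has even rank), so the exponent is a nonnegative integer. In particular ${}^\partial\Gamma_D(z)=\sum_{A\subseteq E}z^{\frac12(\operatorname{rk}M[A]+\operatorname{rk}M[E\setminus A])}$ is a function of $M(D)$ alone.

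\emph{Step 2: the four interlacement matrices.} The four chord diagrams in \eqref{eq:4T} coincide except for the position of a single endpoint $p$ of a chord $b$ relative to the two endpoints of a chord $a$. Let $M$ be the interlacement matrix of the diagram in which $p$ and the other endpoint of $b$ lie on the same side of $a$, so that $M_{ab}=0$. Pushing $p$ across the near endpoint of $a$ toggles only the entry $M_{ab}$; pushing $p$ the long way round, across the far endpoint of $a$, replaces row $b$ by (row $b$)$+$(row $a$) and column $b$ by (column $b$)$+$(column $a$). Hence the four interlacement matrices are $M$, $M+\Delta$, $P^{\mathsf T}MP+\Delta$ and $P^{\mathsf T}MP$ with $\Delta:=E_{ab}+E_{ba}$ and $P:=I+E_{ab}$, and a one-line check gives $P^{\mathsf T}\Delta P=\Delta$, so the third matrix is $P^{\mathsf T}(M+\Delta)P$. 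Moreover in \eqref{eq:4T} the two diagrams in which $a,b$ are unlinked (matrices $M$ and $P^{\mathsf T}MP$) occur with opposite signs, and so do the two in which $a,b$ are linked (matrices $M+\Delta$ and $P^{\mathsf T}MP+\Delta$); this is forced by \eqref{eq:4T} being trivial in degree $2$.

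\emph{Step 3: termwise cancellation.} Writing $g_N(A):=\tfrac12(\operatorname{rk}N[A]+\operatorname{rk}N[E\setminus A])$, by Step 1 it remains to check
$$ \sum_{A\subseteq E}\Bigl(z^{\,g_M(A)}-z^{\,g_{M+\Delta}(A)}+z^{\,g_{P^{\mathsf T}MP+\Delta}(A)}-z^{\,g_{P^{\mathsf T}MP}(A)}\Bigr)=0, $$
and I would verify this for each $A$ separately, splitting on $A\cap\{a,b\}$. If $A$ contains both of $a,b$ or neither, then the row/column operation of Step 2 is an invertible congruence on whichever of the blocks $M[A]$, $M[E\setminus A]$ carries the indices $\{a,b\}$ and does nothing to the other block, hence preserves both ranks; so $g_{P^{\mathsf T}MP}(A)=g_M(A)$ and $g_{P^{\mathsf T}MP+\Delta}(A)=g_{M+\Delta}(A)$ and the four terms cancel. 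If $A$ contains exactly one of $a,b$, then $\Delta$ restricts to zero on both $A$ and $E\setminus A$, so $g_{M+\Delta}(A)=g_M(A)$ and $g_{P^{\mathsf T}MP+\Delta}(A)=g_{P^{\mathsf T}MP}(A)$, and again the four terms cancel. Summing over $A$ proves the theorem.

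The crux is Step 2: reading off from the pictures that, up to relabelling, the interlacement matrix changes only by toggling the entry $M_{ab}$ and by the congruence with $P=I+E_{ab}$; Steps 1 and 3 are then routine linear algebra over $\mathbb{F}_2$. (Equivalently, Step 3 is the statement that ${}^\partial\Gamma$ obeys S.~Lando's four-term relation for graphs, the four matrices above being exactly the four intersection graphs occurring there; and Step 1 can instead be phrased via the rank function of the delta-matroid of $D$.)
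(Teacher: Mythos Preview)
Your proof is correct and takes a genuinely different route from the paper's. The paper argues topologically: for each of the four diagrams it performs the four partial duals with respect to the two distinguished chords, producing sixteen ribbon graphs $G_{i;jk}$, and then pairs them off via explicit edge-slides (the Sliding Lemma) so that the eight pairs cancel with opposite signs. Your argument is algebraic: you encode the genus of every partial dual by the rank formula $g(D^A)=\tfrac12(\operatorname{rk}M[A]+\operatorname{rk}M[E\setminus A])$, identify the four interlacement matrices as $M$, $M+\Delta$, $P^{\mathsf T}MP$, $P^{\mathsf T}(M+\Delta)P$, and then cancel termwise by a two-case rank computation on $A\cap\{a,b\}$.

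What each approach buys: the paper's proof is entirely pictorial and self-contained, needing nothing beyond the sliding move. Your proof is less visual but more structural, and in fact proves strictly more than the stated theorem. Since the rank formula shows ${}^\partial\Gamma_D$ depends only on the interlacement matrix, you recover the Yan--Jin result that ${}^\partial\Gamma$ is an intersection-graph invariant; and since Step~3 uses only that $M$ is a symmetric zero-diagonal $\mathbb{F}_2$-matrix, your argument works verbatim for arbitrary simple graphs and shows that the polynomial $\sum_{A}z^{\frac12(\operatorname{rk}M[A]+\operatorname{rk}M[E\setminus A])}$ satisfies Lando's graph four-term relation --- which is precisely the question raised in the paper's Open Problem~3. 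Two small points worth tightening: the boundary-component count $|S|+1-\operatorname{rk}M[S]$ (equivalently $2g=\operatorname{rk}M[S]$ for a bouquet) deserves a reference or a one-line justification, and in Step~2 the verification that passing from one pair of diagrams to the other changes $M_{bc}$ by exactly $M_{ac}$ (because the moving endpoint traverses an arc between the two ends of $a$, so the parity of $c$-endpoints crossed equals the $a$--$c$ interlacement) is the one place where the chord-diagram geometry enters and could be spelled out a bit more explicitly.
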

\begin{proof}Each of the four chord diagrams of \eqref{eq:4T} involves two chords. One of them is stable (the same in all four diagrams) connecting the bottom arc with upper right arc; we call it the second chord. The other one, with one end at the upper left arc is changing. Its second end goes around the endpoints of the stable chord. We call it the first chord.
We are going to make all possible partial duals relative to these two chords; the partial duals for the chords not indicated in \eqref{eq:4T} are the same for all four diagrams. For example, the result of partial duality of the third diagram of \eqref{eq:4T} relative to the first chord will be denoted $G_{3;10}$ and the fourth diagram of \eqref{eq:4T} relative to both chords will be denoted $G_{4;11}$, etc. Alltogether we will get 16 ribbon graphs which will be partitioned into 8 pairs (indicated by colored frames), so that the ribbon graphs in each pair are obtained from one to another by a sliding operation from Sec.\ref{ss:rg-cd}. Thus, as abstract surfaces, they have the same genus, and therefore cancel each other in the partial-dual genus polynomial.
$$\begin{array}{rcrrrr}
+\risS{-15}{4T1}{\put(25,-6){$G_1$}}{35}{20}{45} & 
   \risS{0}{totor}{}{20}{0}{0}&
  +\risS{-18}{G-1-00}{\put(15,-10){$G_{1;00}$}}{40}{0}{0}&
	+\risS{-18}{G-1-10}{\put(15,-10){$G_{1;10}$}}{40}{0}{0}& 
	+\risS{-18}{G-1-01}{\put(15,-10){$G_{1;01}$}}{40}{0}{0}& 
	+\risS{-18}{G-1-11}{\put(15,-10){$G_{1;11}$}}{40}{0}{0}\\ 
	
-\risS{-15}{4T2}{\put(25,-6){$G_2$}}{35}{20}{45} & 
   \risS{0}{totor}{}{20}{0}{0}&
  -\risS{-18}{G-2-00}{\put(15,-10){$G_{2;00}$}}{40}{0}{0}&
	-\risS{-18}{G-2-10}{\put(15,-10){$G_{2;10}$}}{40}{0}{0}& 
	-\risS{-18}{G-2-01}{\put(15,-10){$G_{2;01}$}}{40}{0}{0}& 
	-\risS{-18}{G-2-11}{\put(15,-10){$G_{2;11}$}}{40}{0}{0}\\ 
	
+\risS{-15}{4T4}{\put(25,-6){$G_3$}}{35}{20}{45} & 
   \risS{0}{totor}{}{20}{0}{0}&
  +\risS{-18}{G-3-00}{\put(15,-10){$G_{3;00}$}}{40}{0}{0}&	
	+\risS{-18}{G-3-10}{\put(15,-10){$G_{3;10}$}}{40}{0}{0}&  
	+\risS{-18}{G-3-01}{\put(15,-10){$G_{3;01}$}}{40}{0}{0}&  
  +\risS{-18}{G-3-11}{\put(15,-10){$G_{3;11}$}}{40}{0}{0}\\ 

-\risS{-15}{4T3}{\put(25,-6){$G_4$}}{35}{20}{30} & 
   \risS{0}{totor}{}{20}{0}{0}&
  -\risS{-18}{G-4-00}{\put(15,-10){$G_{4;00}$}}{40}{0}{0}&
	-\risS{-18}{G-4-10}{\put(15,-10){$G_{4;10}$}}{40}{0}{0}&  
	-\risS{-18}{G-4-01}{\put(15,-10){$G_{4;01}$}}{40}{0}{0}&  
	-\risS{-18}{G-4-11}{\put(15,-10){$G_{4;11}$}}{40}{0}{0}\\ 
\end{array}
$$
The ribbon graphs of the first column form two pairs. The ribbon graphs in these pairs can be obtained from one another by sliding the first chord along the second one. Therefore, their contributions to the partial-dual genus polynomial cancel each other. Here are the sliding operations for other pairs.

$$G_{1;10} =\ \risS{-16}{G-1-10}{\put(-90,35){$(G_{1;10},G_{2;10}):$}
                             }{40}{25}{20} 
	\ =\ \risS{-10}{G-1-10a}{}{35}{0}{0}
	\quad\risS{0}{totor}{\put(1,10){\small slide}}{20}{0}{0}\quad
	  \risS{-13}{G-1-10b}{}{35}{0}{0}\ =\ 
		\risS{-16}{G-2-10}{}{40}{0}{0} 
	\ = G_{2;10}
$$
The slidings in pairs $(G_{3;10},G_{4;10})$, $(G_{1;01},G_{2;01})$, and
$(G_{3;01},G_{4;01})$ are very similar. The remaining two pairs of the last column work as follows.
$$G_{1;11} =\ \risS{-16}{G-1-11}{\put(-55,48){$(G_{1;11},G_{4;11}):$}
                             }{40}{40}{20} 
	\ =\ \risS{-12}{G-1-11a}{}{35}{0}{0}
	\quad\risS{0}{totor}{\put(1,10){\small slide}}{20}{0}{0}\quad
	  \risS{-12}{G-1-11b}{}{35}{0}{0}
	\quad\risS{0}{totor}{\put(1,10){\small slide}}{20}{0}{0}\quad
	  \risS{-12}{G-1-11c}{}{35}{0}{0}\ =\ 
		\risS{-16}{G-4-11}{}{40}{0}{0} 
	\ = G_{4;11}
$$
$$G_{2;11} =\ \risS{-16}{G-2-11}{\put(-55,48){$(G_{2;11},G_{3;11}):$}
                             }{40}{40}{20} 
	\ =\ \risS{-12}{G-2-11a}{}{35}{0}{0}
	\quad\risS{0}{totor}{\put(1,10){\small slide}}{20}{0}{0}\quad
	  \risS{-12}{G-2-11b}{}{35}{0}{0}
	\quad\risS{0}{totor}{\put(1,10){\small slide}}{20}{0}{0}\quad
	  \risS{-12}{G-2-11c}{}{35}{0}{0}\ =\ 
		\risS{-16}{G-3-11}{}{40}{0}{0} 
	\ = G_{3;11}
$$
\end{proof}

\section{Open problems} \label{s:op}
\begin{enumerate}
\item All known constructive examples of weight systems come from Lie algebras (see for example \cite{CDM}). They cover all quantum group knot invariants. On the other hand, there is a difficult theorem of P.~Vogel \cite{vog} claiming the existence of a Vassiliev invariant not coming from Lie algebras. But this is only an existence theorem, no concrete construction of such a weight system is known. Does the partial-dual genus polynomial weight system also come from Lie algebras?
If so, it would be interesting to explicitly express it in terms of Lie algebra weight systems. If not, then it will give a new constructive proof of Vogel's theorem.
\footnote{In December 2023 Iain Moffatt informed me that he proved that partial-dual genus polynomial weight system comes from the Lie algebra
$\mathfrak{gl}_N$ weight system.}

\item The primitive space of the bialgebra of chord diagrams can be describe it terms of {\it Jacobi diagrams} \cite{CDM}. It would be interesting to find a combinatorial construction of the partial-dual genus polynomial weight system on Jacobi diagrams. Possibly this would involve a generalization on the concept of partial duality to hypermaps from \cite{CVT22} and its further generalization to the partial duality of Jacobi diagrams.

\item According to \cite{CL}, Vassiliev invariants not distinguishing mutant knots are associated to weight systems which depend only on the {\it intersection graph} of chord diagrams. This means that the values of a weight system on two chord diagrams with the same intersection graphs are equal. Q.~Yan and X.~Jin \cite{YaJi22s} proved this for the intersection graphs of chord diagrams. Their main tool was a recurrent formula given in Theorem 5.2. In another paper \cite{YaJi22dm} (see also \cite{Yus22}) they
also extended the partial-dual genus polynomial to binary delta-matroids. Consequently, it can be considered as a new polynomial for all ({\it framed} in the  terminology of \cite{Lan-J}) graphs. It is interesting whether the recurrent formula from the Theorem 5.2. is also valid for all such graphs. Are there any relation of this polynomial with other graph polynomials?
%
%
Does it satisfy the four-term relation on graphs in sense of 
\cite{Lan,Lan-J}? If so, then it would provide a polynomial on the bialgebra of framed graphs. Its primitive space was recently described in
\cite{Kar}. It would be interesting to construct this polynomial on the primitive elements from \cite{Kar}.

\item If we take the average of a weight system over all graphs, we can get a solution of some interesting PDEs of mathematical physics \cite{CKL}. Would it be similar for the average of the partial-dual genus polynomial over all ribbon graphs?

\item It would be interesting to investigate probabilistic properties of the partial-dual genus polynomial similar to those of the genus of ribbon graphs in \cite{CP13,CP16}.
\end{enumerate}

\small{
    
}

\EditInfo{August 8, 2023}{December 19, 2023}{Jacob Mostovoy}
\end{document}